\newcommand{\dju}{\ensuremath{\mathaccent\cdot\cup}}
\newcommand{\G}{\mathscr{G}}
\newcommand{\partn}{\vdash}
\newcommand{\0}{\emptyset}
\newcommand{\qan}[1]{[{#1}]_q} 
\newcommand{\ol}{\overline}
\newcommand{\st}{~|~}
\newcommand{\Ff}{\mathbb{F}}
\newcommand{\Pp}{\mathbb{P}}
\newcommand{\Qq}{\mathbb{Q}}
\newcommand{\Nn}{\mathbb{N}}
\newcommand{\includefigure}[3]{
  \begin{center}
  \resizebox{#1}{#2}{\includegraphics{#3}}
  \end{center}}
\DeclareMathOperator{\gir}{gir}
\DeclareMathOperator{\mcd}{mcd}
\DeclareMathOperator{\nul}{nul}
\DeclareMathOperator{\rank}{rank}
\newcommand{\PS}{\mathcal{X}} 
\newcommand{\PSd}{\mathcal{X}^d} 
\newcommand{\PSI}{\mathcal{X}_I} 
\newcommand{\PSId}{\mathcal{X}^d_I} 
\newcommand{\PSD}{\mathcal{X}_D} 
\newcommand{\PV}{\mathcal{V}} 
\newcommand{\PVd}{\mathcal{V}^d} 
\newtheorem{theorem}{Theorem}
\numberwithin{theorem}{section}
\newtheorem{proposition}[theorem]{Proposition}
\newtheorem{corollary}[theorem]{Corollary}
\newtheorem{lemma}[theorem]{Lemma}
\theoremstyle{definition}
\newtheorem{definition}[theorem]{Definition}
\newtheorem{example}[theorem]{Example}
\newtheorem{remark}[theorem]{Remark}
\newcommand{\excise}[1]{}
\newcommand{\sm}{\setminus}
\title{Graph Varieties in High Dimension}
\author{Thomas Enkosky}
\address{Department of Mathematics,
University of Kansas,
Lawrence, KS 66047}
\email{tenkosky@math.ku.edu}
\author{Jeremy L.\ Martin}
\address{Department of Mathematics,
University of Kansas,
Lawrence, KS 66047}
\email{jmartin@math.ku.edu}
\date{June 20, 2011}
\thanks{Second author partially supported by an NSA Young Investigators Grant.}
\keywords{Graph variety, picture space, set partition, ear decomposition}
\subjclass[2000]{05C62, 14N20}
\begin{document}

\begin{abstract}
We study the \emph{picture space} $\PSd(G)$ of all embeddings of a finite graph $G$ as point-and-line arrangements in an arbitrary-dimensional projective space, continuing previous work on the planar case.  The picture space admits a natural decomposition into smooth quasiprojective subvarieties called \emph{cellules}, indexed by partitions of the vertex set of $G$, and the irreducible components of $\PSd(G)$ correspond to cellules that are maximal with respect to a partial order on partitions that is in general weaker than refinement.  We study both general properties of this partial order and its characterization for specific graphs.  Our results include complete combinatorial descriptions of the irreducible components of the picture spaces of complete graphs and complete multipartite graphs, for any ambient dimension~$d$.  In addition, we give two graph-theoretic formulas for the minimum ambient dimension in which the directions of edges in an embedding of $G$ are mutually constrained.
\end{abstract}

\maketitle

\section{Introduction}

The motivation for this paper is the geometric problem of determining the constraints on the directions of the lines formed by placing $n$ points in space and joining each pair of points with a line.

Let $G$ be a finite simple graph with vertices $V=V(G)$ and edges $E=E(G)$, let $\Ff$ be a field (typically algebraically closed), and let $\Pp^d=\Pp^d_\Ff$ denote projective $d$-dimensional space over~$\Ff$.  A \emph{picture} of $G$ consists of a point $P(v)$ for each vertex~$v$, and a line $P(e)$ for each edge $e$, all lying in~$\Pp^d$, subject to the conditions $P(v)\in P(e)$ whenever vertex~$v$ is an endpoint of edge~$e$.  The \emph{picture space} $\PS(G)=\PSd(G)$ of all pictures of~$G$ can be regarded naturally as a reduced scheme over~$\Ff$ (specifically, it is an algebraic subset of a product of Grassmannians, with the containment conditions expressible in terms of Pl\"ucker coordinates).

In general, the space $\PS(G)$ is not irreducible.  Its most important irreducible component, the \emph{picture variety} $\PV(G)$, consists of the Zariski closure of the locus of \emph{generic} pictures (i.e., those for which the points $P(v)$ are all different).  A picture in~$\PV(G)$ is called \emph{near-generic}; that is, it is either itself generic or is a limit of generic pictures.  Thus $\PV(G)=\PS(G)$ if and only if every picture is near-generic.  The projection of $\PV(G)$ onto the lines $\{P(e)\st e\in E\}$ (i.e., forgetting the locations of the points $P(v)$) is called the \emph{slope variety} of~$G$.

The picture space, picture variety and slope variety are collectively called \emph{graph varieties}; previous work of the second author on this subject includes \cite{GGV,Picspace,Slopes}.  A goal of this study is to understand the equations defining the picture variety as an irreducible component of the picture space; that is, the set of constraints on the directions of lines in a generic picture of~$G$.  In the case that $G$ is the complete graph on $n$ vertices, this is identical to the motivating problem stated at the start of this paper.

A related problem is to determine the component structure of the picture space.  Roughly speaking, each component other than $\PV(G)$ corresponds to a way in which the points of a picture can collapse, thus releasing some of the constraints on the directions of lines.  In the case of plane pictures (i.e., with ambient space $\Pp^2$), ``direction'' may be replaced with ``slope''.  The second author previously gave combinatorial descriptions of the components of the picture space, and of the generators of the ideal of slope constraints for an arbitrary graph~\cite{GGV}, and more detailed information about the Gr\"obner geometry of the picture space of the complete graph~\cite{Slopes}.  A key tool in these results is the theory of \emph{combinatorial rigidity} (for a good general reference, see \cite{GSS}).

The goal of this paper is to begin extending the theory of graph varieties to ambient dimension greater than~2.  Combinatorial rigidity is much less well understood in high dimension; on the other hand, some of the methods used in \cite{GGV} carry over effectively to the general setting, as we now explain.

The picture space has a natural decomposition into quasiprojective subvarieties called \emph{cellules}.  For each set partition $\pi$ of $V(G)$, the cellule $\PS_\pi(G)=\PSd_\pi(G)$ consists of the pictures for which $P(v)=P(w)$ if and only if $v,w$ belong to the same block of $\pi$.  For instance, the locus of generic pictures is the cellule corresponding to the partition of $V(G)$ into singleton subsets.  Every irreducible component of $\PS(G)$ is in fact the Zariski closure of some cellule (Proposition~\ref{cpt-cellule-closure}), although not every cellule closure is a full component (because some cellules can be contained in the closures of other cellules).  Accordingly, we define the \emph{cellule (partial) order} $\prec_{G,d}$ on set partitions $\pi,\sigma$ by
\begin{equation}
\label{partial-order}
\pi\prec_{G,d}\sigma\iff \PSd_\pi(G)\subseteq\overline{\PSd_\sigma(G)}.
\end{equation}
Therefore, describing the components of $\PSd(G)$ reduces to the combinatorial problem of determining the maximal set partitions with respect to the cellule order.

By way of motivation, consider the case $d=1$.  A one-dimensional picture of~$G$ is just an ordered $n$-tuple of points in $\Pp^1$, where $n=|V|$; the data corresponding to edges of $G$ is trivial.  Thus $\PS^1(G)=(\Pp^1)^n$, and the cellules are just the elements of the intersection lattice of the projectivized braid arrangement (see, e.g.,~\cite[pp.~8--9]{HA}), partially ordered by refinement: $\pi\prec_{G,d}\sigma$ if every block of~$\sigma$ is contained in some block of~$\pi$.  In the case $d=2$, some refinement relations disappear; for the precise statement, which uses rigidity theory, see \cite[Thm.~6.3]{GGV}.  As the ambient dimension~$d$ increases, the cellule order contains fewer and fewer relations.

We start by establishing the following general facts about the cellule order.  First, let $B$ be a set of vertices that induces an acyclic subgraph, and $\pi$ is any set partition containing $B$ as a block.  Then the points $\{P(v)\st v\in B\}$ of $B$ in a picture $P\in\PS_\pi(G)$ can be separated: that is, $P$ can be obtained as a limit of pictures in a cellule indexed by a refinement $\sigma$ of $\pi$.  Therefore, $\pi\prec_{G,d}\sigma$ (Corollary~\ref{acyclic-corollary}).  In particular, if $G$ itself is acyclic, then $\PS(G)$ is irreducible, and in fact smooth~\cite[\S7]{Picspace}.  An immediate application is a description of the  components of the picture space of the cycle $C_n$: if $d<n$ then $\PSd(C_n)$ is irreducible, while if $d\geq n$ then it has two components, namely the picture variety and the indiscrete cellule (the locus of pictures in which all points $P(v)$ coincide).  This component structure corresponds to the following simple geometric fact: in a generic picture of~$C_n$, the lines corresponding to the edges connect $n$ distinct points, so their affine span has dimension at most $n-1$.  If $d<n$ then this constraint is vacuous; on the other hand, if $d\geq n$, then the affine span of the lines in an indiscrete picture can have dimension~$n$, so not every picture is near-generic.

On the other hand, in order to show that breaking up a block $B$ does \emph{not} correspond to a relation in $\prec_{G,d}$, it often suffices to calculate the dimension of the indiscrete cellule in $\PS(G|_B)$ (Proposition~\ref{squash-a-block}).  This suggests a method of determining the component structure for families of graphs that are \emph{hereditary}, i.e., closed under taking induced subgraphs.  As an application, consider the \emph{complete multipartite graph} $G=K_{q_1,\dots,q_n}$, which consists of $q_i$ vertices of color~$i$ for each $i$, $1\leq i\leq n$, with an edge between each pair of vertices of different colors.  The general results described above lead to a explicit, if somewhat technical, combinatorial description of the components of~$\PSd(G)$ (Theorem~\ref{Kmn-components}).  In the important special case of the complete graph~$K_n$ (which can be regarded as a complete multipartite graph with one vertex of each color), the component structure becomes much easier to describe.

\textbf{Theorem~\ref{Kn-components}:} Let $d\geq 3$.  Then the components of the picture space $\PSd(K_n)$ are exactly the closures of cellules corresponding to set partitions with no block of size two.

We find it notable that the result is the same for all ambient dimensions $d\geq 3$, and is much simpler than the $d=2$ case \cite[Thm.~6.4]{GGV}.  A rough combinatorial explanation is that for $d=2$, the minimal obstructions to irreducibility are rigidity circuits, while when $d\geq 3$, the minimal obstructions are cycles of length~$d$ (although this is not quite true; see below) which are combinatorially easier to describe.

As observed above, the cellule order becomes weaker and weaker as $d$ increases.  Accordingly, we define the \emph{minimum constraint dimension} $\mcd(G)$ to be the smallest positive integer~$d$ for which $\PSd(G)$ is not irreducible (or $\infty$ if no such $d$ exists).  Equivalently, $\mcd(G)$ is the smallest ambient dimension in which the directions of the lines in a generic $d$-dimensional picture of $G$ admit some mutual constraint.  For instance, as discussed above, the $n$-cycle $C_n$ has $\mcd(C_n)=n$; more generally, $\mcd(G)\leq\gir(G)$, where $\gir(G)$ denotes the \emph{girth} of $G$, that is, the length of the smallest cycle.  This bound is not sharp; for instance, the graph obtained by identifying two 4-cycles along an edge has girth~4, but minimum constraint dimension~3.  In fact, a result of B.~Servatius~\cite{Ser} implies that the gap between $\gir(G)$ and $\mcd(G)$ can be arbitrarily wide; see Remark~\ref{girth-mcd} below.

Previous work of the second author \cite[Theorem~14]{Picspace} implies that the minimum constraint dimension can be determined from the Tutte polynomial of $G$.  (For general information on the Tutte polynomial, see \cite{BryOx}.)  We give two simpler versions of this formula, respectively in terms of the graphic matroid of $G$ (Proposition~\ref{mcd-formula}) and ear decompositions (Proposition~\ref{ear-formula}).  As a more general class of examples, we calculate the minimum constraint dimension for the ``onion'' graph formed by identifying multiple paths at their endpoints.

\section{Components of the Picture Space} \label{component-section}

Fix an ambient dimension $d\geq 3$ and a graph $G=(V,E)$.  The notation $\pi\partn V$ means that $\pi$ is a partition of $V$ into pairwise disjoint subsets, called \emph{blocks}.  The corresponding equivalence relation will be denoted by $\sim_\pi$; thus $v\sim_\pi w$ if and only if $v,w$ belong to the same block of $\pi$.  The number of blocks of $\pi$ will be denoted $|\pi|$.

The \emph{cellule} corresponding to $\pi$ is defined as
\begin{equation}
\label{define-cellule}
\PSd_\pi(G) = \left\{P\in\PSd(G) ~\Big|~ P(v)=P(w)\iff v\sim_\pi w\right\}.
\end{equation}
Thus the picture space is the disjoint union of the cellules.
Moreover, each cellule is a smooth, quasiprojective subvariety
of $\PSd(G)$ with a natural bundle structure as follows.  Consider the
projection map $\PS_\pi(G)\to(\Pp^d)^{|\pi|}$ that records only the positions of the points (one for each block~$B_i$).  This makes $\PS_\pi(G)$ into a bundle with smooth base
  $$\left\{(p_1,\dots,p_{|\pi|})\in(\Pp^d)^{|\pi|} ~\Big|~ p_i\neq p_j \text{ for all $i\neq j$}\right\}$$
 and smooth fiber $(\Pp^{d-1})^{\delta(\pi,G)}$,
where $\delta(\pi,G)$ is the number of edges with both endpoints in
the same block of~$\pi$.  Therefore,
  \begin{equation}
  \label{dim-cellule}
  \dim \PS_\pi(G) = d|\pi|+(d-1)\delta(\pi,G).
  \end{equation}
Important special cases are the \emph{discrete cellule} $\PSD(G)$ and the \emph{indiscrete cellule} $\PSI(G)$, which correspond respectively to the \emph{discrete partition} of $V$ into singleton blocks, and the \emph{indiscrete partition} with only one block.  Thus the discrete cellule $\PSD(G)$ consists precisely of the generic pictures, and its closure is the picture variety $\PV(G)$, which is the whole picture space if and only if $\PSD(G)$ is the unique cellule of largest dimension~\cite[Thm.~4.5]{GGV}.  Meanwhile, the indiscrete cellule $\PSI(G)$ has a smooth bundle structure, with base $\Pp^d$ and fiber $(\Pp^{d-1})^{|E(G)|}$; it is the only cellule that is Zariski-closed in $\PS(G)$.  Note that
$\dim\PSD(G)=d|V|$ and $\dim\PSI(G)=d+(d-1)|E|$.

\begin{proposition}
\label{cpt-cellule-closure}
For every graph $G$ and ambient dimension $d$,
the components of $\PSd(G)$ are exactly the maximal sets of the form
$\overline{\PS_\pi}$.
\end{proposition}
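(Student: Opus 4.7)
The plan is to use the bundle structure stated just before the proposition together with the finiteness of the set of partitions of $V(G)$ to reduce to a standard fact about irreducible decompositions.

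First I would argue that each cellule $\PSd_\pi(G)$ is irreducible. The projection $\PSd_\pi(G)\to(\Pp^d)^{|\pi|}$ exhibits it as a Zariski-locally trivial bundle over the open subvariety of $(\Pp^d)^{|\pi|}$ where the $|\pi|$ coordinates are pairwise distinct, with fiber $(\Pp^{d-1})^{\delta(\pi,G)}$. The base is a nonempty open subset of an irreducible variety, hence irreducible, and the fiber is irreducible, so the total space $\PSd_\pi(G)$ is irreducible. Consequently each closure $\overline{\PSd_\pi(G)}$ is an irreducible closed subvariety of $\PSd(G)$.

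Next I would observe that, since the cellules partition $\PSd(G)$ and the number of set partitions $\pi\partn V(G)$ is finite, we have the finite covering
\begin{equation*}
\PSd(G)\;=\;\bigcup_{\pi\partn V(G)}\overline{\PSd_\pi(G)}.
\end{equation*}
At this point I would invoke the standard fact that if a Noetherian topological space is a finite union of irreducible closed subsets, its irreducible components are exactly the maximal members of that collection (after discarding those that happen to be contained in others). Concretely: any irreducible component $C$ of $\PSd(G)$ satisfies $C=\bigcup_\pi(C\cap\overline{\PSd_\pi(G)})$, so by irreducibility of $C$ and finiteness of the union, $C\subseteq\overline{\PSd_\pi(G)}$ for some $\pi$; conversely each $\overline{\PSd_\pi(G)}$ is irreducible and closed, hence contained in some component; combining these inclusions forces $C=\overline{\PSd_\pi(G)}$ for the unique $\pi$ that is maximal in the cellule order with $\pi\prec_{G,d}\pi_C$, and shows every maximal $\overline{\PSd_\pi(G)}$ arises this way.

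I do not expect any genuine obstacle here; the content lies entirely in the irreducibility of the individual cellules, which is immediate from the bundle description the authors have already supplied, and the rest is the general principle on irreducible decompositions in a Noetherian space. The one point worth stating carefully, to make the word \emph{maximal} match the statement precisely, is that maximality is being taken within the family $\{\overline{\PSd_\pi(G)}\st\pi\partn V(G)\}$, which is exactly the statement that $\pi$ is maximal under the cellule order $\prec_{G,d}$ defined in \eqref{partial-order}.
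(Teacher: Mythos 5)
Your proposal is correct and follows essentially the same route as the paper: irreducibility of each cellule from its bundle description, hence irreducibility of each cellule closure, and then the standard fact that in a Noetherian space written as a finite union of irreducible closed subsets the irreducible components are the maximal members of that family. The paper's own proof is just a compressed version of this argument, so no further comparison is needed.
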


\begin{proof}
Each cellule is irreducible and smooth (by its description
above as a bundle); therefore, its closure is irreducible
\cite[Example~1.1.4, p.~3]{Har}.  Therefore,
the picture space can be written as the union of finitely many
closed, irreducible subsets, namely the closures of the cellules,
so each irreducible component must appear in this union.
\end{proof}

Accordingly, to understand the component structure of $\PSd(G)$,
it suffices to describe the cellule order $\prec_{G,d}$ combinatorially.
This problem is the subject of the remainder of this section.

\begin{lemma}
Let $\sigma,\pi\partn V(G)$ with $\pi\prec_{G,d}\sigma$.
Then $\sigma$ refines $\pi$.
\end{lemma}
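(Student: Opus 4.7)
The plan is to prove the contrapositive: assuming $\sigma$ does not refine $\pi$, I will exhibit a picture in $\PSd_\pi(G)$ that fails to lie in $\overline{\PSd_\sigma(G)}$. The strategy exploits the fact that the locus where two specified vertices collide is a Zariski-closed subset of the picture space, and is therefore preserved under closure.

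Concretely, for any pair of vertices $v,w\in V(G)$, set
$$Z_{v,w} = \{Q\in\PSd(G) \st Q(v)=Q(w)\}.$$
This is closed in $\PSd(G)$, since it is the preimage of the diagonal of $\Pp^d\x\Pp^d$ under the morphism $Q\mapsto(Q(v),Q(w))$ (a coordinate projection in the Grassmannian model of $\PSd(G)$). Now suppose, toward a contradiction, that there exist vertices $v,w$ with $v\sim_\sigma w$ but $v\not\sim_\pi w$. By the definition~\eqref{define-cellule} of a cellule, every $Q\in\PSd_\sigma(G)$ lies in $Z_{v,w}$, so $\overline{\PSd_\sigma(G)}\subseteq Z_{v,w}$. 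On the other hand, every $P\in\PSd_\pi(G)$ satisfies $P(v)\neq P(w)$, so $\PSd_\pi(G)\cap Z_{v,w}=\0$. Since $\PSd_\pi(G)$ is nonempty---one may assign distinct points of $\Pp^d$ to the blocks of $\pi$ and choose any line through the appropriate point for each edge whose endpoints share a block---this forces $\PSd_\pi(G)\not\subseteq\overline{\PSd_\sigma(G)}$, contradicting $\pi\prec_{G,d}\sigma$.

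I do not anticipate any serious obstacle. The only point to verify carefully is that the evaluation map recording the positions of $P(v)$ and $P(w)$ is a morphism of schemes, so that $Z_{v,w}$ is indeed closed; after that, the argument is a one-line application of the order-reversing property of Zariski closure applied to the containment $\PSd_\sigma(G)\subseteq Z_{v,w}$.
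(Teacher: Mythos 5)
Your argument is essentially the paper's own proof: both take the contrapositive, note that the collision condition $P(v)=P(w)$ is Zariski-closed, hence persists on $\overline{\PSd_\sigma(G)}$, yet fails everywhere on $\PSd_\pi(G)$. The extra details you supply (closedness via the diagonal, nonemptiness of $\PSd_\pi(G)$) are correct and harmless refinements of the same argument.
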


\begin{proof}
If $\sigma$ does not refine $\pi$, then there is some pair of vertices $x,y$ such that $x,y$ are in the same block of $\sigma$, but not in the same block of $\pi$.  The equality $P(x)=P(y)$ is a Zariski-closed condition that holds on $\PS_\sigma(G)$, hence on $\overline{\PS_\sigma(G)}$, but not on $\PS_\pi(G)$.  (In fact, we have shown more: if $\sigma$ does not refine $\pi$, then $\overline{\PS_\sigma(G)}\cap \PS_\pi(G)=\emptyset$.)
\end{proof}

\begin{proposition}
\label{split-doubleton}
Let $\pi,\sigma$ be partitions of $V(G)$ such that $\sigma$ is obtained from $\pi$ by splitting a doubleton block into two singletons, say $\pi=\{B_1,\dots,B_r,\{x,y\}\}$ and $\sigma=\{B_1,\dots,B_r,\{x\},\{y\}\}$.  Then $\PS_\pi(G)\subset\overline{\PS_\sigma(G)}$.  In particular, no partition containing a doubleton block corresponds to a maximal cellule.
\end{proposition}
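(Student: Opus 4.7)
The plan is to show that every $P \in \PS_\pi(G)$ lies in $\overline{\PS_\sigma(G)}$ by constructing an explicit one-parameter algebraic family $\{P_t\}$ living generically in $\PS_\sigma(G)$ and specializing to $P$ at $t=0$. Set $p := P(x) = P(y)$, and choose a line $\ell \subset \Pp^d$ through $p$ as follows: if $xy \in E(G)$, let $\ell := P(xy)$; otherwise take any line through $p$ avoiding the finitely many points $P(v)$ for $v \neq x, y$. Parametrize a curve $t \mapsto q_t \in \ell$ with $q_0 = p$ and $q_t \neq p$ for $t \neq 0$.

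Then I would define $P_t$ by keeping every datum of $P$ unchanged except as follows: replace $P(x)$ by $q_t$; if $xy \in E(G)$, set $P_t(xy) := \ell$, which still passes through both $q_t = P_t(x)$ and $p = P_t(y)$ by construction; and for every other edge $xv$ incident to $x$, set $P_t(xv)$ equal to the line through $q_t$ and $P(v)$, which is well-defined for all small $t$ since $v$ lies in a block of $\pi$ different from $\{x,y\}$ and hence $P(v) \neq p$. Three routine checks then complete the argument: (i) each $P_t$ is a picture of $G$, a tautology from the edge-line definitions; (ii) $P_t \in \PS_\sigma(G)$ for every $t \neq 0$ in a suitable punctured neighborhood of $0$, using that the inequalities $P(u) \neq P(v)$ across distinct blocks of $\pi$ survive small perturbation and that $P_t(x) = q_t \neq p = P_t(y)$ by design; and (iii) $P_t \to P$ as $t \to 0$, since $q_t \to p$ and each affected line through $q_t$ and $P(v)$ specializes to the line through $p$ and $P(v)$, which is $P(xv)$.

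The only delicate point, and the main obstacle I anticipate, is the placement of $q_t$ on the specific line $\ell = P(xy)$ when $xy$ is an edge: this is what allows $P_t(xy)$ to remain fixed while still containing the separated endpoints, and thus to converge correctly to $P(xy)$ at $t = 0$. Without this choice, $P_t(xy)$ would be forced to tilt in some direction, and the family would not specialize back to $P$ in a controlled way.

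Finally, the \emph{in particular} clause follows from what has just been shown together with Proposition~\ref{cpt-cellule-closure}. Since $\sigma$ strictly refines $\pi$, the preceding lemma rules out $\sigma \prec_{G,d} \pi$, so the containment $\overline{\PS_\pi(G)} \subseteq \overline{\PS_\sigma(G)}$ just established is strict; hence $\pi$ cannot be maximal in the cellule order, and $\overline{\PS_\pi(G)}$ cannot be an irreducible component of $\PSd(G)$.
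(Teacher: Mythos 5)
Your construction is essentially the paper's own proof: both choose the line $L$ to be $P(xy)$ when $xy\in E$ (and otherwise any line through the common point), move one of the two identified vertices along $L$ while fixing all other vertices and the lines of edges internal to blocks of $\pi$, and observe that the resulting family lies generically in $\PS_\sigma(G)$ and specializes back to the given picture at the central parameter value. The argument (including the handling of the strictness in the ``in particular'' clause) is correct.
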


\begin{proof}
Fix a picture $P_0\in\PS_\pi(G)$.  If $x,y$ are joined by an edge $e$, then let $L=P_0(e)$; otherwise, let~$L$ be any line containing the point~$P_0(x)=P_0(y)$.  Let $Z$ be the locus of all pictures $P$ such that
\begin{itemize}
\item $P(v)=P_0(v)$ for all vertices $v\neq y$;
\item $P(y)\in L$; and
\item $P(f)=P_0(f)$ for all edges $f$ with both endpoints in the same block of $\pi$.
\end{itemize}
Note that if $f$ is an edge with endpoints in different blocks
of $\pi$, then $P_0(f)$ is determined by the foregoing data,
with finitely many exceptions (namely, if $P(y)=P(v)$
for some $v\in V\sm\{x,y\}$).  Therefore, the map $P\mapsto P(y)$
is a birational equivalence of $Z$ with $L$ itself.

In particular, setting $P(y)=P_0(y)$ gives $P=P_0$.  It follows
that $P_0\in \overline{\PS_\sigma(G)}$.  Since $P_0$ was chosen
arbitrarily in $\PS_\pi(G)$, we conclude that
$\PS_\pi(G)\subset\overline{\PS_\sigma(G)}$.
\end{proof}

Note that the proof of Proposition~\ref{split-doubleton} fails for larger blocks.  Pulling apart the vertices in a block of size $r>2$ requires the corresponding $\binom{r}{2}$ lines in $P$ to lie in a common $(r-1)$-dimensional space, which need not be the case for all pictures.  On the other hand, a stronger result using essentially the same argument is as follows:

\begin{proposition}
\label{general-split-doubleton}
Let $G=(V,E)$ be any graph (not necessarily simple),
let $S\subseteq V$ and let $y\in V\sm S$
be a vertex such that no more than one edge incident
to $y$ has its other endpoint in $S$.  Let $S'=S\cup\{y\}$,
let $\pi$ be a partition of $V$ such that $S'$ is a block of
$\pi$, and let $\sigma=\pi\sm\{S'\}\cup\{S,\{y\}\}$.
Then $\PS_\pi(G)\subset\overline{\PS_\sigma(G)}$.
\end{proposition}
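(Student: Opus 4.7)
The plan is to generalize the sliding argument used for Proposition~\ref{split-doubleton}. Fix an arbitrary picture $P_0 \in \PS_\pi(G)$; since $S'$ is a block of $\pi$, all vertices in $S'$ map to a single point $p := P_0(y)$. I will deform $P_0$ by moving $P(y)$ along a carefully chosen line $L$ through $p$, while holding $P(v) = P_0(v)$ for every $v \neq y$ and preserving the line data on every edge internal to a block of $\pi$.

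The hypothesis on $y$ supplies the right line: if a (necessarily unique) edge $e = yv$ with $v \in S$ exists, set $L := P_0(e)$, which automatically passes through $p = P_0(v)$; otherwise take $L$ to be any line through $p$. Let $Z \subseteq \PS(G)$ be the locus of pictures $P$ with $P(w) = P_0(w)$ for $w \neq y$, $P(y) \in L$, $P(f) = P_0(f)$ whenever $f$ has both endpoints in a common block of $\pi$, and $P(f)$ equal to the line determined by its two endpoint images otherwise. As in Proposition~\ref{split-doubleton}, the map $P \mapsto P(y)$ realizes $Z$ as birationally equivalent to $L \cong \Pp^1$.

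The real work is verifying that $Z$ is well-defined as a subvariety of $\PS(G)$ meeting $\PS_\sigma(G)$ in a dense open subset. Edges with both endpoints in $S$ are handled trivially since $P = P_0$ there, and the unique edge $f$ from $y$ into $S$ (if any) is exactly $e$, for which the choice $L = P_0(e)$ forces $P(f) = L = P_0(f)$ throughout. For edges incident to $y$ with other endpoint in a different block of $\pi$, the line $P(f)$ is well defined provided $P(y)$ avoids the finitely many other vertex points, which holds on a dense open subset of $L$. On this open set the picture lies in $\PS_\sigma(G)$, and $P_0$ is recovered by specialization at $P(y) = p$, so $P_0 \in \overline{\PS_\sigma(G)}$. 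The main conceptual obstacle — and the precise point where the hypothesis on $y$ is indispensable — lies in the choice of $L$: if two distinct edges $e_1, e_2$ from $y$ had their other endpoints in $S$, then preserving both $P(e_1) = P_0(e_1)$ and $P(e_2) = P_0(e_2)$ while sliding $P(y)$ would confine $P(y)$ to $P_0(e_1) \cap P_0(e_2)$, which is generically just $\{p\}$, leaving no room to deform.
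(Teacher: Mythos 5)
Your proposal is correct and follows essentially the same argument as the paper: choose $L$ to be the line of the (at most one) edge from $y$ into $S$, or any line through the common point otherwise, slide $P(y)$ along $L$ while fixing all other data, note that all but finitely many resulting pictures lie in $\PS_\sigma(G)$, and recover $P_0$ by specialization, giving $P_0\in\overline{\PS_\sigma(G)}$. The extra remarks on why the hypothesis on $y$ is needed match the paper's discussion following Proposition~\ref{split-doubleton}.
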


\begin{proof}
Let $P_0\in\PS_\pi(G)$.  If $G$ has an edge $e$ between $y$ and a vertex in $S$, then let $L=P_0(e)$, otherwise, let~$L$ be any line through the point $P_0(y)$.  As before, construct a family of pictures $P$ such that $P(x)=P_0(x)$ for $x\in V\sm\{y\}$; $P(y)$ varies along $L$; $P(e)=P_0(e)$ (if applicable), and $P(f)=P_0(f)$ for all edges $f$ with both endpoints in the same block of $\pi$.  Then all but finitely many $P$ lie in $\PS_\sigma(G)$, and setting $P(y)=P_0(y)$ gives $P=P_0$, so $P_0\in\overline{\PS_\pi(G)}$.
\end{proof}

\begin{corollary} \label{acyclic-corollary}
Let $W\subseteq V$ such that the induced subgraph $G|_W$ is acyclic.
Let $\sigma,\pi\partn V$ such that $W$ is a block of $\pi$ and $\sigma$ is
obtained by $\pi$ from subdividing $W$ into smaller blocks.  Then
$\PS_\pi(G)\subseteq\overline{\PS_\sigma(G)}$.
\end{corollary}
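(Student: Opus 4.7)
The plan is to prove Corollary~\ref{acyclic-corollary} by induction on $|W|$, using Proposition~\ref{general-split-doubleton} as the engine. The base case $|W|\leq 1$ is trivial. For the inductive step, the forest structure of $G|_W$ guarantees a vertex $y\in W$ of degree at most one in $G|_W$ (a leaf of some tree component, or an isolated vertex). Since $G|_W$ is the \emph{induced} subgraph, every neighbor of $y$ lying in $W$ is already a neighbor in $G|_W$, so $y$ has at most one neighbor in $W\sm\{y\}$. Proposition~\ref{general-split-doubleton} applied with $S=W\sm\{y\}$ then gives $\PS_\pi(G)\subseteq\overline{\PS_{\pi^*}(G)}$, where $\pi^*$ is obtained from $\pi$ by replacing the block $W$ with $W\sm\{y\}$ and $\{y\}$. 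The subgraph $G|_{W\sm\{y\}}$ is again acyclic, so the inductive hypothesis applies to the appropriate further refinement of $W\sm\{y\}$. Transitivity of $\prec_{G,d}$, inherited from transitivity of containment among Zariski closures, chains $\pi\prec_{G,d}\pi^*\prec_{G,d}\sigma$ into the desired $\pi\prec_{G,d}\sigma$.

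The principal obstacle is aligning the leaf $y$ with the target subdivision $\sigma$. When $\{y\}$ happens to be a singleton block of $\sigma$ -- as is automatic whenever $\sigma$ refines $W$ all the way into singletons -- the induction closes cleanly and the recursion produces the full descent from $\pi$ to $\sigma$. When $y$ lies in a non-singleton block $W_i\subset W$ of $\sigma$, peeling $\{y\}$ off overshoots the target, and $\prec_{G,d}$ offers no mechanism to merge $\{y\}$ back into $W_i$. To handle this, I would first try to select $y$ so that $\{y\}$ is a singleton block of $\sigma$ whenever possible; otherwise I would supplement the peeling argument with a direct deformation. Let $p_0=P_0(v)$ for $v\in W$, choose tangent directions $d_1,\dots,d_k$ at $p_0$, and set $P_t(v)=p_0+td_i$ (in a local affine chart) for $v\in W_i$; for each edge $e$ of $G|_W$ crossing from $W_i$ to $W_j$, the line $P_t(e)=\overline{p_i(t)p_j(t)}$ limits to the line through $p_0$ in direction $d_j-d_i$, so matching $P_0(e)$ imposes $d_j-d_i\parallel \ell_e$ where $\ell_e$ is the tangent of $P_0(e)$ at $p_0$. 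Because $G|_W$ is a forest, these constraints carry no cyclic obstruction and can be propagated along a spanning sub-forest of the quotient graph on the blocks.

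Making this direct construction precise is the hard part, since the propagation must yield distinct $d_i$'s (so that $P_t\in\PS_\sigma(G)$ for small $t\neq 0$) and must be compatible with all edges of $G|_W$ that lie within a single block or join $W$ to $V\sm W$; for the within-block edges the lines $P_t(e)$ can be prescribed freely through $p_i(t)$ to approach $P_0(e)$, and for the outgoing edges they are determined by the moving points. The bookkeeping reconciling the leaf-peeling step with the direct deformation step -- and in particular verifying that the two strategies together cover every possible subdivision $\sigma$ -- is where the real content of the proof lies.
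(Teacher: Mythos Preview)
The paper gives no explicit proof; the corollary is meant to follow from Proposition~\ref{general-split-doubleton} by iterated leaf-peeling, which is exactly your first approach.  Your observation that this only reaches the refinement of $W$ into singletons is correct, and so is your instinct that the general $\sigma$ is genuinely problematic.  In fact the corollary as stated is \emph{false} for arbitrary subdivisions.  Take $G$ with $V=\{1,2,3,4\}$ and $E=\{12,34\}$, so $G|_V$ is a forest; let $\pi=\{\{1,2,3,4\}\}$ and $\sigma=\{\{1,3\},\{2,4\}\}$.  Every picture in $\PS_\sigma(G)$ has $P(12)=\overline{P(1)P(2)}=\overline{P(3)P(4)}=P(34)$, and this closed condition persists in $\overline{\PS_\sigma(G)}$.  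But a generic picture in $\PS_\pi(G)$ has $P(12)$ and $P(34)$ as two unrelated lines through the common point, so $\PS_\pi(G)\not\subseteq\overline{\PS_\sigma(G)}$.  (For $d\ge 3$ one can also see this from dimensions: $\dim\PS_\pi=3d-2>2d=\dim\PS_\sigma$.)

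Your proposed deformation argument breaks for exactly this reason.  You propagate the constraints $d_j-d_i\parallel\ell_e$ only along a spanning sub-forest of the quotient graph on the blocks $W_1,\dots,W_k$, but every crossing edge of $G|_W$ imposes such a constraint, and the quotient graph need not be a forest even though $G|_W$ is.  In the example above the quotient has two parallel edges between $\{1,3\}$ and $\{2,4\}$, forcing $d_2-d_1$ to be simultaneously parallel to $\ell_{12}$ and $\ell_{34}$, which is impossible for generic $P_0\in\PS_\pi$.  So the ``no cyclic obstruction'' claim fails, and no bookkeeping will rescue it.

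What survives is the case where $\sigma$ splits $W$ into singletons, and that is all the paper actually uses (Corollary~\ref{cycle-corollary} and Theorem~\ref{Kmn-components} only need to show that $\PS_\pi$ lies in the closure of a strictly finer cellule, and splitting all the way to singletons suffices).  For that case your leaf-peeling induction is complete and is precisely the intended argument.
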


An immediate application of Corollary~\ref{acyclic-corollary} is a simple description of the components of the picture space of a cycle.

\begin{corollary} \label{cycle-corollary}
Let $C_n$ be the cycle on $n$ vertices.  If $d<n$, then $\PS^d(C_n)$ is irreducible, while if
$d\geq n$, then $\PS^d(C_n)$ has two components, namely the picture variety $\PVd(C_n)$
and the indiscrete cellule $\PSId(C_n)$.
\end{corollary}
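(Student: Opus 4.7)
The plan is to identify the maximal cellules of $\PSd(C_n)$ under $\prec_{C_n,d}$, since by Proposition~\ref{cpt-cellule-closure} the components are exactly their closures. The discrete cellule is always maximal and satisfies $\overline{\PSDd(C_n)} = \PVd(C_n)$, so the whole question reduces to whether any other cellule also survives as maximal, and ultimately whether $\PSId(C_n) \subseteq \PVd(C_n)$.

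First I would show that every cellule other than $\PSId(C_n)$ lies in $\PVd(C_n)$. The key fact about $C_n$ is that its only cycle is itself, so in any partition $\pi$ of $V(C_n)$ other than the indiscrete one, every block $B$ is a proper subset of $V(C_n)$ and the induced subgraph $C_n|_B$ is a disjoint union of paths, in particular acyclic. Iterating Corollary~\ref{acyclic-corollary}, refining one block at a time down to the discrete partition, yields $\PSd_\pi(C_n) \subseteq \overline{\PSDd(C_n)} = \PVd(C_n)$. Since $\PSId(C_n)$ is already Zariski-closed, it follows that $\PSd(C_n) = \PVd(C_n) \cup \PSId(C_n)$, and the corollary reduces to showing that $\PSId(C_n) \subseteq \PVd(C_n)$ if and only if $d < n$.

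For $d \geq n$, I would argue by a dimension count using~(\ref{dim-cellule}): $\dim \PVd(C_n) = dn$ while $\dim \PSId(C_n) = d + (d-1)n \geq dn$. Since both are closed irreducible varieties and are not equal (only one contains the open dense discrete cellule), containment is impossible, so $\PSId(C_n)$ is a second component.

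For $d < n$, I would realize each indiscrete picture $P_0$ as a limit of generic pictures. Work in an affine chart of $\Pp^d$ whose origin is the common point of $P_0$, so each edge $e_i = v_i v_{i+1}$ contributes a direction vector $\ell_i \in \Ff^d$. Since $n > d$, on the dense open subset of $\PSId(C_n)$ where $\ell_1,\dots,\ell_n$ are in general linear position there exist nonzero scalars $t_i$ with $\sum_i t_i \ell_i = 0$; then setting $P_s(v_1)=0$ and $P_s(v_{i+1}) = P_s(v_i) + s t_i \ell_i$ defines a one-parameter family that closes up around the cycle and limits to $P_0$ as $s \to 0$. The main technical obstacle is verifying that $P_s \in \PSDd(C_n)$ for small $s \neq 0$, i.e., that the points $P_s(v_j)$ are pairwise distinct---equivalently, that every partial sum $\sum_{k=i}^{j-1} t_k \ell_k$ is nonzero. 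These nonvanishing conditions are open on the $(n-d)$-dimensional space of relations and can be arranged for generic choices of $P_0$ and $(t_i)$, after which closedness of $\PVd(C_n)$ upgrades the conclusion from a dense subset of $\PSId(C_n)$ to all of it.
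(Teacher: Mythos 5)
Your proof is correct, and after the common first step (iterating Corollary~\ref{acyclic-corollary} to place every cellule other than $\PSId(C_n)$ inside $\PVd(C_n)$, so that $\PSd(C_n)=\PVd(C_n)\cup\PSId(C_n)$) it takes a genuinely different route from the paper's. The paper disposes of both cases at once by quoting the criterion of \cite[Thm.~4.5]{GGV} --- $\PSd(G)=\PVd(G)$ if and only if the discrete cellule is the unique cellule of largest dimension --- together with the dimension formula \eqref{dim-cellule}; in particular the irreducibility for $d<n$ is obtained purely by dimension count, with no explicit degeneration. You instead argue both cases by hand: for $d\geq n$ you observe that $\PSId\subseteq\PVd$ together with $\dim\PSId\geq\dim\PVd$ would force the two irreducible closed sets to coincide, which is impossible since only $\PVd$ meets the discrete cellule; and for $d<n$ you exhibit a generic indiscrete picture as a limit of generic pictures by choosing a linear relation $\sum_i t_i\ell_i=0$ among the $n>d$ edge directions and shrinking the resulting closed polygon, then using closedness of $\PVd$ and irreducibility of $\PSId$ to upgrade from a dense subset. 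This yields a self-contained, geometrically transparent argument that avoids the Poincar\'e-polynomial machinery behind \cite[Thm.~4.5]{GGV}, at the cost of some genericity bookkeeping. The one point you leave under-justified is nonemptiness (not just openness) of your conditions on $(t_i)$: you need a relation with all $t_i\neq 0$ and all proper consecutive partial sums $\sum_{k=i}^{j-1}t_k\ell_k$ nonzero, and openness alone does not give existence. This is easily repaired: for directions $\ell_1,\dots,\ell_n$ in general position, none of these finitely many linear conditions vanishes identically on the $(n-d)$-dimensional relation space ($t_i\equiv 0$ on it would force $\ell_i$ outside the span of the others, and an interval sum vanishing identically would force every relation to split into a relation on the interval and one on its complement, which a dimension count rules out); when $n=d+1$ everything is automatic since any $d$ of the directions are independent. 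With that observation added, your degeneration argument is complete.
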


\begin{proof}
By Corollary~\ref{acyclic-corollary}, every cellule other than $\PSId(C_n)$ is contained in the picture variety, so $\PSId(C_n)$ is the only other possible component.  Meanwhile, the cellule-dimension formula~\eqref{dim-cellule} implies that $\dim\PSId(G)\geq\dim\PVd(G)$ if and only if $d\geq n$;
as mentioned above, we have $\PSd(G)=\PVd(G)$ if and only if $\PSD(G)$ is the unique cellule of largest dimension~\cite[Thm.~4.5]{GGV}, implying the result.
\end{proof}

Indeed, when $d\geq n$, the picture space of $C_n$ is not irreducible for the geometric reason discussed in the Introduction --- the lines of a generic picture of $C_n$ must span an affine space of dimension at most $n-1$, while this constraint does not apply to pictures in the indiscrete cellule.
In this way, cycles behave in ambient dimension $\geq 3$ just as rigidity circuits do in ambient dimension~2 (cf.~\cite[Lemma~6.2]{GGV}).

We now turn to the problem of finding sufficient conditions for a cellule to be maximal in the cellule order.

\begin{definition}
Let $d\geq 2$.  A graph $H$ is called \emph{$d$-heavy} if the indiscrete cellule $\PSId(H)$ has maximum dimension (not necessarily uniquely) among all cellules of $\PSd(H)$.
\end{definition}

If $H$ is $d$-heavy, then the indiscrete cellule is maximal in the cellule order, hence is a component.  The following result reduces the geometric statement that a cellule $\PSd_\pi(G)$ is maximal to the combinatorial statement that the induced subgraph $G|_B$ is $d$-heavy for every block $B\in\pi$.

\begin{proposition}\label{squash-a-block}
Let $G$ be a graph and~$d\geq 2$.  Let~$B$ be a set of vertices of~$G$
such that the induced subgraph~$H=G|_B$ is $d$-heavy.
Let $\sigma,\pi$ be partitions of~$V(G)$ such that $\sigma$ refines~$\pi$
and $B\in\pi\sm\sigma$. Then $\PSd_\pi(G)\not\subset\overline{\PS_\sigma^d(G)}$.
\end{proposition}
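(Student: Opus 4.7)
The plan is to reduce to the subgraph $H := G|_B$ via a natural projection, and then exploit the $d$-heaviness of $H$ together with the disjointness of distinct cellules to reach a contradiction. First, I would introduce the restriction morphism $\rho : \PSd(G) \to \PSd(H)$ sending a picture $P$ to $P|_B$, obtained by forgetting the points $P(v)$ for $v \notin B$ and the lines $P(e)$ for edges $e$ not contained in $B$. This is a projection of varieties, and in particular a continuous morphism. Since $B$ is a single block of $\pi$, every picture in $\PSd_\pi(G)$ restricts to one in which all vertex-points of $B$ coincide, so $\rho$ maps $\PSd_\pi(G)$ into $\PSId(H)$; in fact the restriction $\rho|_{\PSd_\pi(G)}$ is surjective onto $\PSId(H)$, as any $Q \in \PSId(H)$ extends to a picture in $\PSd_\pi(G)$ by choosing distinct generic points in $\Pp^d$ for the remaining blocks of $\pi$ and filling in lines. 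Meanwhile, because $\sigma$ refines $\pi$ and $B \in \pi \setminus \sigma$, the induced partition $\tau := \sigma|_B$ of $B$ has at least two blocks, and $\rho(\PSd_\sigma(G)) \subseteq \PSd_\tau(H)$.

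Next, I would argue by contradiction. Assuming $\PSd_\pi(G) \subseteq \overline{\PSd_\sigma(G)}$, applying $\rho$ and using the standard fact $\rho(\overline{S}) \subseteq \overline{\rho(S)}$ would yield
\[
\PSId(H) \;=\; \rho\bigl(\PSd_\pi(G)\bigr) \;\subseteq\; \overline{\rho\bigl(\PSd_\sigma(G)\bigr)} \;\subseteq\; \overline{\PSd_\tau(H)}.
\]
Now $d$-heaviness of $H$ gives $\dim \PSId(H) \geq \dim \PSd_\tau(H) = \dim \overline{\PSd_\tau(H)}$. Since $\PSId(H)$ is closed (the indiscrete cellule is Zariski-closed, as noted earlier in the paper) and irreducible, and sits inside the irreducible variety $\overline{\PSd_\tau(H)}$ with at least the same dimension, one must have $\PSId(H) = \overline{\PSd_\tau(H)}$. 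But then $\PSd_\tau(H) \subseteq \PSId(H)$, contradicting the disjointness of the distinct cellules $\PSd_\tau(H)$ and $\PSId(H)$ (distinct because $\tau \neq \{B\}$).

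The main obstacle I anticipate is keeping closures straight under the projection $\rho$, in particular ensuring that $\rho(\overline{\PSd_\sigma(G)}) \subseteq \overline{\PSd_\tau(H)}$ rather than just something weaker; this is handled by the general continuity fact above, but is easy to gloss over. Everything else is routine: the existence of $\rho$ and the surjectivity of $\rho|_{\PSd_\pi(G)}$ onto $\PSId(H)$ follow immediately from the bundle description of the cellules given in the paper, and the final contradiction uses only two facts already established there, namely that the indiscrete cellule is closed and that distinct cellules are pairwise disjoint.
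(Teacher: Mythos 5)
Your proof is correct and follows essentially the same route as the paper: both use the forgetting morphism $\PSd(G)\to\PSd(H)$ (your $\rho$, the paper's $\phi$), the surjectivity of its restriction to $\PSd_\pi(G)$ onto $\PSId(H)$, and the containment $\rho(\overline{\PSd_\sigma(G)})\subseteq\overline{\PSd_\tau(H)}$ to contradict $d$-heaviness. In fact you spell out the final contradiction (closedness of the indiscrete cellule, equal dimensions forcing equality of irreducible varieties, then disjointness of distinct cellules) more explicitly than the paper's terse conclusion, which is a welcome bit of extra detail rather than a deviation.
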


\begin{proof}
Suppose that $B=B_1\cup\cdots\cup B_r$, where $B_1,\dots,B_r\in\sigma$.
Let $\phi$ be the map that takes a picture of $G$ and forgets all vertices
except those in~$B$, and all edges except those with both endpoints in~$B$.
We have a commutative diagram

\begin{equation}
\xymatrix{
 Z=\PS_\pi(G) \rto\dto^{\phi} & \PS(G) \dto^{\phi} & \PS_\sigma(G)=Y \dto^{\phi}\lto \\
\PSI(H) \rto & \PS(H) & \PS_{\tilde\sigma}(H)\lto
}
\end{equation}
where $\tilde\sigma=\{B_1,\dots,B_r\}$.
Here each vertical map is a surjection, and each horizontal
map is the natural inclusion.
Now, suppose that $\bar Y\supset Z$.  Then 
$\overline{\phi(Y)}=\overline{\phi(\overline
Y)}\supset\overline{\phi(Z)}\supset\phi(Z)$.
But this contradicts
the hypothesis that $H$ is $d$-heavy.
\end{proof}

The complete multipartite graph $G=K_{q_1,\dots,q_n}$ has $q_i$ vertices of color~$i$ for each $i\in[n]=\{1,2,\dots,n\}$, with an edge between every pair of vertices of different colors (for short, a \emph{heterochromatic pair}).  For convenience, we assume that $q_1\geq\cdots\geq q_n>0$.  (Note that here $n$ denotes the number of colors, not the number of vertices.)  Proposition~\ref{squash-a-block} can be used as a general tool to characterize the component structures of picture spaces of complete and complete multipartite graphs.  Our methods may apply more generally to \emph{hereditary} familes of $d$-heavy graphs: a family $\G$ is hereditary if every induced subgraph of a member of~$\G$ is also a member of~$\G$.

We assume in what follows that either $n\geq 3$, or if $n=2$ then $q_1,q_2\geq 2$.  (Otherwise, $G$ is acyclic.)  Let $V_i$ be the set of vertices of color $i$ and let $V=V_1\cup\cdots\cup V_n$.  Each set partition
$\pi\partn V$ with $r$ blocks can be written as
$$
\pi ~=~ \{B_1 = B_{11}\cup\cdots\cup B_{1n},\ \ \dots,\ \
B_r = B_{r1}\cup\cdots\cup B_{rn}\}
$$
where $V_j=B_{1j}\dju\cdots\dju B_{rj}$ for each $j\in[n]$.  The dimension of the cellule $\PSd_\pi(G)$ depends only on the numbers $b_{ij}=|B_{ij}|$, so it is convenient to regard $\pi$ as a way of placing colored balls in boxes, with the $i^{th}$ box containing $b_{ij}$ balls of color~$j$ for each~$i,j$, and no box empty.  The cellule dimension formula \eqref{dim-cellule} may be rephrased in terms of balls and boxes:
\begin{equation} \label{ballbox}
\dim\PSd_\pi(G) = d|\{\text{boxes}\}| + (d-1)|\{\text{heterochromatic pairs in the same box}\}|.
\end{equation}

\begin{proposition} \label{complete-multip-degen}
Let $d\geq 3$ and let $G=K_{q_1,\dots,q_n}$ be a complete multipartite graph on $n\geq 2$ colors; if $n=2$, then assume that $q_1\geq 3$ and $q_2\geq 2$.  Then $G$ is $d$-heavy.  In particular, the indiscrete cellule of $\PSd(G)$ is an irreducible component.
\end{proposition}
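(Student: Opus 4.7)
The strategy is to translate the claim that $G$ is $d$-heavy into a combinatorial inequality and then prove it inductively. By formula \eqref{dim-cellule},
\[
\dim\PSId(G) = d + (d-1)|E(G)| \quad\text{and}\quad \dim\PSd_\pi(G) = d|\pi| + (d-1)\delta(\pi,G),
\]
so $G$ is $d$-heavy if and only if for every $\pi\partn V(G)$,
\[
(d-1)\,s_\pi \ \geq \ d\,(|\pi|-1),
\]
where $s_\pi := |E(G)| - \delta(\pi,G)$ counts the heterochromatic edges of $G$ crossing blocks of $\pi$. Equivalently, the quotient multigraph $G/\pi$ (with vertex set $\pi$ and $s_\pi$ edges, inheriting multiplicities from $G$) has at least $\tfrac{d}{d-1}(|\pi|-1)$ edges. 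Once this is established, the ``in particular'' statement follows from Proposition~\ref{squash-a-block} applied with $B=V(G)$, since the indiscrete cellule is already Zariski-closed.

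I would argue by induction on $r := |\pi|$, with the base case $r=1$ trivial. For $r \geq 2$, the key dichotomy is whether $G/\pi$ has a multi-edge. \emph{Multi-edge case:} if some pair of blocks $B_a, B_b$ is joined by $m \geq 2$ heterochromatic edges, then merging them yields $\pi'$ with $|\pi'|=r-1$ and $s_{\pi'} = s_\pi - m$, and the inductive hypothesis combined with $d \geq 3$ gives
\[
(d-1)s_\pi \ =\ (d-1)s_{\pi'} + (d-1)m \ \geq\ d(r-2) + 2(d-1) \ \geq\ d(r-1).
\]

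The principal obstacle is the \emph{degenerate case}, where $G/\pi$ is simple (every pair of blocks spans at most one heterochromatic edge). A structural case analysis on the chromatic composition of blocks shows that such a $\pi$ must be extremely restricted. For $n \geq 3$: if a block $B$ contained vertices of two distinct colors $i,j$, then any block containing a vertex of a third color $k$ would contribute at least $|B \cap V_i|+|B \cap V_j| \geq 2$ heterochromatic edges with $B$, and a variant of this argument rules out monochromatic blocks of size $\geq 2$, forcing $\pi$ to be the discrete partition. For $n=2$ with $q_1 \geq 3$ and $q_2 \geq 2$: a parallel analysis (in which the hypothesis $q_1 \geq 3$ is essential) shows $\pi$ is either the discrete partition or consists of a single bichromatic doubleton together with singletons. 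In each exceptional case the residual inequality reduces to an explicit polynomial inequality in the $q_i$, which is easily verified by induction on the $q_i$; it is tight at $d=3$ for $G = K_3$ and $G = K_{3,2}$. The exclusion of $K_{2,2}$ in the hypothesis is sharp, as there $\dim \PSD^3 = 12 > 11 = \dim \PSI^3$, so $K_{2,2}$ is not $3$-heavy.
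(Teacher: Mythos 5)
Your proposal is correct, and it runs on the same engine as the paper's proof while organizing the endgame differently. The paper likewise works from the dimension formula \eqref{dim-cellule}/\eqref{ballbox} and merges blocks: its rules \ref{RR-O}--\ref{RO-X} are exactly your multi-edge step, since merging two blocks joined by at least two heterochromatic edges changes the cellule dimension by at least $2(d-1)-d=d-2>0$. Where you diverge is the residual case in which the quotient $G/\pi$ is simple: the paper stays inside the merging framework by allowing three- and five-block mergers (rules \ref{R-O-G}--\ref{R-R-R-O-O}) together with a case analysis showing every partition can be driven to the indiscrete one without losing dimension, whereas you classify the simple-quotient partitions outright (discrete for $n\geq 3$; discrete, or one bichromatic doubleton plus singletons, for $n=2$) and verify $(d-1)s_\pi\geq d(|\pi|-1)$ there by direct computation. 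Your route is arguably cleaner as an induction on $|\pi|$, and it has the added benefit of exhibiting exactly where the bound is tight ($K_3$ and $K_{3,2}$ at $d=3$) and why $K_{2,2}$ must be excluded; the paper's route avoids any arithmetic in the $q_i$. Two small points to fix in a final write-up: your structural classification for $n=2$ needs only $q_1,q_2\geq 2$ --- the hypothesis $q_1\geq 3$ is what makes the discrete-partition inequality $2q_1q_2\geq 3(q_1+q_2-1)$ hold, so it belongs to the numerical check rather than to the classification; and the sketched verifications should be written out, e.g.\ for $n\geq 3$ one needs $\sum_{i<j}q_iq_j\geq\tfrac32\bigl(\sum_i q_i-1\bigr)$, which follows since it holds with equality-or-better at $q_1=\cdots=q_n=1$ and each unit increase of some $q_i$ raises the left side by at least $n-1\geq 2$ and the right side by only $\tfrac32$ (and then $d\geq 3$ gives $d/(d-1)\leq \tfrac32$). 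Neither point is a gap in the argument.
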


\begin{proof}
Consider any set partition $\pi\partn V(G)$, corresponding to some way of placing colored balls in boxes as described above.  We will show that it is possible to  merge all the boxes into a single box, step by step, with each step either preserving or increasing the dimension of the corresponding cellule.  It will follow that the indiscrete cellule has dimension greater than or equal to every other cellule.  The following rules list types of mergers with this property.  (These rules are of course invariant with respect to permuting colors; it is more convenient to say, e.g., ``two red balls and one orange ball'' rather than ``two balls of the same color and a third ball of a different color''.)

\begin{enumerate}[label=(\roman{*}), ref=(\roman{*})]
\item\label{RR-O} Merge $B_1$ and $B_2$, where $B_1$ contains two red balls and $B_2$ contains an orange ball;
\item\label{RO-RO} Merge $B_1$ and $B_2$, where $B_1$ and $B_2$ each contain a red ball and a orange ball;
\item\label{RO-G} Merge $B_1$ and $B_2$, where $B_1$ contains a red ball and an orange ball, and $B_2$ contains a green ball.
\item\label{RO-X} Merge $B_1$ and $B_2$, where $B_1$ contains balls of at least three colors, and $B_2$ is arbitrary.
\item\label{R-O-G} Merge $B_1$, $B_2$ and $B_3$, where $B_1$ contains a red ball, $B_2$ contains an orange ball, and $B_3$ contains a green ball;
\item\label{RO-R-O} Merge $B_1$, $B_2$ and $B_3$, where $B_1$ contains a red and an orange ball, $B_2$ contains a red ball, and $B_3$ contains an orange ball.
\item\label{R-R-R-O-O} Merge $B_1,\dots,B_5$, where $B_1,B_2,B_3$ each contain a red ball and $B_4,B_5$ each contains an orange ball.
\end{enumerate}

Checking that the cellule dimension remains the same or increases after any of these mergers is an elementary consequence of \eqref{ballbox}.  For example, each of the mergers~\ref{RR-O}\dots\ref{RO-X} results in one fewer box, but at least two additional heterochromatic pairs in the same box, hence increases dimension by at least $-d+2(d-1)=d-2>0$; merger~\ref{R-R-R-O-O} increases dimension by at least $-4d+6(d-1)=2d-6\geq 0$.

We now show how to use the merging rules to obtain the indiscrete partition from an arbitrary partition~$\pi$.

\begin{itemize}
\item \textbf{Case 1:} $n\geq 3$.

\begin{itemize}
\item \textit{Case 1a:} If some box of $\pi$ contains two or more red balls, then either it contains all the orange balls or, by~\ref{RR-O}, can be merged with a box containing an orange ball.  The resulting box either contains all the green balls, or, by~\ref{RR-O}, can be merged with a box containing a green ball.  The resulting box contains balls of at least three colors, and so can be merged with every other box in turn by repeated applications of~\ref{RO-X}.

\item \textit{Case 1b:} If some box contains both a red ball and an orange ball, then either it contains all the green balls, or, by~\ref{RO-G}, can be merged with a box containing a green ball.  The resulting box contains balls of at least three colors, and so can be merged with every other box.

\item \textit{Case 1c:} The only other possibility is that $\pi$ is the discrete partition.  By~\ref{R-O-G}, we can merge three boxes containing balls of different colors, then proceed as in Case~1a.
\end{itemize}

\item \textbf{Case 2:} $n=2$.  Recall that we have assumed that $q_1\geq 3$ and $q_2\geq 2$.

\begin{itemize}

\item \textit{Case 2a:} If some box of $\pi$ contains two red balls, then by~\ref{RR-O} it can be merged with all boxes containing orange balls, then with all the other boxes.

\item \textit{Case 2b:} If two boxes each contain both a red and an orange ball, then we can first merge them by~\ref{RO-RO}, then proceed as in Case~2a.

\item \textit{Case 2c:} If one box contains two balls, one red and one orange, and all other boxes are singletons, then we can merge the doubleton box with two singleton boxes with different-colored balls by~\ref{RO-R-O}, then proceed as in Case~2a.

\item \textit{Case 2d:} If all boxes contain singletons, then we can first apply~\ref{R-R-R-O-O}, then proceed as in Case~2a.
\end{itemize}
\end{itemize}

In all cases, we can eventually merge all boxes into a single box, while increasing or preserving dimension at every stage of the process.
\end{proof}

It is possible to relax the restrictions on $d$ and $q_1,\dots,q_n$ in the statement of Proposition~\ref{complete-multip-degen}.  We have not done so, because if $d=2$, then the component structure of $\PSd(G)$ is already known~\cite[\S6]{GGV}, while $K_{2,2}$ is just a 4-cycle, so its component structure is given by Corollary~\ref{cycle-corollary}.

\begin{theorem} \label{Kmn-components}
Let $d\geq 3$, and let $G=K_{q_1,\dots,q_n}$ be a complete multipartite graph other than an acyclic graph or $K_{2,2}$.  Then the components of $\PSd(G)$ are exactly the cellule closures $\ol{\PSd_\sigma(G)}$, where $\sigma$ is a set partition of~$[n]$ in which every block either (i) contains only one vertex; (ii) contains at least two red vertices and at least two orange vertices; or
(iii) contains vertices of three or more colors.
\end{theorem}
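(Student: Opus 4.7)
The plan is to prove both directions of this ``iff'' characterization using the machinery already developed, particularly Corollary~\ref{acyclic-corollary} and Proposition~\ref{squash-a-block}.

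For the necessity direction, I would assume some block $B$ of $\sigma$ violates all three conditions. Such a block has at least two vertices, uses at most two color classes, and (if two colors) has one color represented only once. In both configurations the induced subgraph $G|_B$ is acyclic: either edgeless (when $B$ is monochromatic) or a star $K_{1,m}$ (when $B$ has two colors, one with a single vertex). Corollary~\ref{acyclic-corollary} then furnishes a strict refinement $\tau$ of $\sigma$ (obtained by splitting $B$) with $\PSd_\sigma(G)\subseteq\overline{\PSd_\tau(G)}$, showing $\overline{\PSd_\sigma(G)}$ is properly contained in a larger cellule closure and hence is not a component.

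For the sufficiency direction, I would assume every block of $\sigma$ satisfies (i), (ii), or (iii) and argue that $\sigma$ is maximal in $\prec_{G,d}$; combined with Proposition~\ref{cpt-cellule-closure}, this gives $\overline{\PSd_\sigma(G)}$ as a component. Any strict refinement $\tau$ must subdivide some non-singleton block $B$ of $\sigma$; this $B$ is of type (ii) or (iii), so $G|_B$ is a complete multipartite graph with either at least two vertices in each of two color classes, or vertices of at least three colors. Such a $G|_B$ satisfies the hypotheses of Proposition~\ref{complete-multip-degen}, so $G|_B$ is $d$-heavy, and Proposition~\ref{squash-a-block} then yields $\PSd_\sigma(G)\not\subseteq\overline{\PSd_\tau(G)}$.

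The main obstacle I anticipate is the borderline case of a type-(ii) block with exactly two vertices of each of two colors, which gives $G|_B=K_{2,2}$; this subgraph falls outside the hypotheses of Proposition~\ref{complete-multip-degen} and, via Corollary~\ref{cycle-corollary}, is in fact not $3$-heavy. Handling this would require either a separate direct cellule-dimension argument inside $G|_B$ that still rules out $\PSd_\sigma(G)\subseteq\overline{\PSd_\tau(G)}$ in the ambient $G$, or a sharpening of condition (ii) to exclude it. The casework for the necessity direction is otherwise a routine enumeration.
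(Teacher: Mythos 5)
Your outline is essentially the paper's own proof: necessity is exactly the observation that a block violating (i)--(iii) induces an edgeless graph or a star, hence an acyclic subgraph, so Corollary~\ref{acyclic-corollary} applies; sufficiency is Proposition~\ref{complete-multip-degen} fed into Proposition~\ref{squash-a-block} (the paper cites only the former, but this combination is clearly what is meant). The obstacle you flag, however, is a genuine gap, and the paper's proof glosses over it in exactly the way you anticipate: condition (ii) allows a block $B$ with precisely two vertices of each of two colors, so that $G|_B\isom K_{2,2}=C_4$, which falls outside the hypotheses of Proposition~\ref{complete-multip-degen}, and for $d=3$ is not $d$-heavy.

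How this resolves depends on $d$. For $d\geq4$ the gap is easy to close: by \eqref{dim-cellule} the indiscrete cellule of $C_4$ has dimension $5d-4$, while the discrete cellule has dimension $4d$ and every other cellule has dimension at most $4d-1$, so $C_4$ is $d$-heavy once $d\geq4$ and Proposition~\ref{squash-a-block} applies after all. For $d=3$ no ``separate direct argument'' can succeed, because the statement itself fails for such blocks. Take $G=K_{3,2}$ with red vertices $r_1,r_2,r_3$ and orange vertices $o_1,o_2$, and $\sigma=\{\{r_1,r_2,o_1,o_2\},\{r_3\}\}$, which satisfies (i)--(ii). For $P_0\in\PS^3_\sigma(G)$, the restriction of $P_0$ to the $4$-cycle on the big block is an arbitrary picture of $C_4$ in $\Pp^3$, hence a limit of generic pictures of $C_4$ because $\PS^3(C_4)$ is irreducible (Corollary~\ref{cycle-corollary}); keeping $P(r_3)=P_0(r_3)$ fixed throughout the deformation, the lines $P(r_3o_j)$ are determined by their (distinct) endpoints and converge to $P_0(r_3o_j)$. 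Hence $P_0\in\ol{\PS^3_D(G)}=\PV^3(G)$, so $\ol{\PS^3_\sigma(G)}$, of dimension $14$, lies inside the $15$-dimensional $\PV^3(G)$ and is not a component; the same extension argument works whenever a block induces exactly $K_{2,2}$ and $d=3$. So for $d=3$ condition (ii) must be sharpened to require at least three vertices of one of the two colors (equivalently, to exclude blocks inducing $K_{2,2}$); with that amendment your argument, and the paper's, goes through verbatim.
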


\begin{proof}
Suppose that $\sigma$ contains a block that is not of any of these forms.  Then the induced subgraph on that block is acyclic, and by Corollary~\ref{acyclic-corollary}, the cellule $\PSd_\sigma(G)$ is not maximal.  On the other hand, if $\sigma$ meets the conditions of the theorem, then $\sigma$ is maximal in cellule order by Proposition~\ref{complete-multip-degen}.  Therefore, by Proposition~\ref{cpt-cellule-closure}, $\overline{\PSd_\sigma(G)}$ is a component of $\PSd(G)$, and these are all the components.
\end{proof}

An important special case is the complete graph $K_n$, which is the complete multipartite graph $K_{1,1,\dots,1}$.  If $n=2$ then $\PSd(K_n)$ is smooth; otherwise, since there is only one vertex of each color, condition~(ii) of Theorem~\ref{Kmn-components} is irrelevant, and conditions (i) and (iii) say that each block must be either a singleton, or else contain at least three vertices.  That is:

\begin{theorem} \label{Kn-components}
Let $d\geq 3$ and $n\geq 2$.  Then the components of $\PSd(K_n)$ are exactly the cellule closures $\ol{\PS_\sigma(K_n)}$, where $\sigma$ ranges over all set partitions of~$[n]$ with no blocks of size two.
\end{theorem}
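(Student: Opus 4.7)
The plan is to obtain Theorem~\ref{Kn-components} as an essentially immediate specialization of Theorem~\ref{Kmn-components}, viewing $K_n$ as the complete multipartite graph $K_{1,1,\dots,1}$ with $n$ color classes, each of size~$1$. The only real work is a combinatorial translation of conditions (i)--(iii) in Theorem~\ref{Kmn-components} for this special choice of color multiplicities, together with disposing of the degenerate case $n=2$, which is excluded from the hypotheses of that theorem.

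For $n=2$, the graph $K_2$ is a single edge and hence acyclic, so $\PSd(K_2)$ is irreducible (this was observed in the introduction and follows from Corollary~\ref{acyclic-corollary} applied to the whole vertex set). The only partition of $[2]$ having no block of size two is the discrete partition $\{\{1\},\{2\}\}$, and its cellule closure equals the full picture space. The theorem therefore holds trivially in this case, so it suffices to treat $n\geq 3$.

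For $n\geq 3$, the graph $K_n$ contains triangles and so is not acyclic, and it is not $K_{2,2}$; hence Theorem~\ref{Kmn-components} applies to $G=K_{1,1,\dots,1}$. That theorem identifies the components of $\PSd(G)$ with the closures of cellules indexed by partitions $\sigma$ of $[n]$ whose every block satisfies condition (i), (ii), or (iii). Since each color class is a singleton, distinct vertices of $K_n$ have distinct colors, so no block can contain two vertices of the same color and condition~(ii) (at least two red and two orange vertices in a block) is vacuous. Condition~(iii) (vertices of three or more colors in a block) is then equivalent to the block having at least three vertices. Combining this with condition~(i), the blocks permissible in a maximal $\sigma$ are exactly those of size~$1$ or size~$\geq 3$, i.e., those of size other than~$2$, which is precisely the condition in the statement.

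I do not anticipate any substantive obstacle; the argument is a matter of bookkeeping once Theorem~\ref{Kmn-components} is in hand. The only point to watch is that the hypotheses of Theorem~\ref{Kmn-components} really are satisfied for $K_n$ with $n\geq 3$ (non-acyclic and distinct from $K_{2,2}$), and that the $n=2$ boundary case is handled by the observation that acyclic graphs have irreducible picture spaces.
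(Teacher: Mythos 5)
Your proposal is correct and matches the paper's own argument: the paper likewise obtains Theorem~\ref{Kn-components} by specializing Theorem~\ref{Kmn-components} to $K_{1,1,\dots,1}$, observing that condition (ii) is vacuous when every color class is a singleton, so conditions (i) and (iii) say exactly that blocks have size one or at least three, with the $n=2$ case noted separately. Your handling of $n=2$ via acyclicity (irreducibility, with the discrete cellule's closure as the unique component) is a fine way to make that boundary case explicit.
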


We close this section with some potential problems for future study.  Proposition~\ref{complete-multip-degen} is far from sharp, in the sense that there are $d$-heavy graphs with many fewer edges than a complete multipartite graph.  This suggests looking for a lower bound on the number of edges that guarantees $d$-heaviness.  Another possibility is to look more closely at the cellule order.  For instance, is it possible to characterize all posets arising as $\prec_{G,d}$ for some $G$ and $d$?  (Every such poset must of course be a weakening of the partition lattice.)  In addition, what can be said about the order ideal generated by the discrete partition, i.e., the set of cellules consisting of quasi-generic pictures?

\section{Minimum constraint dimension as a combinatorial invariant}
\label{mcd-section}
Let $G$ be a connected graph that is not acyclic.  We define the \emph{minimum constraint dimension} $\mcd(G)$ to be the smallest positive integer~$d$ for which $\PSd(G)$ is not irreducible.  Implicit in the results of \cite{Picspace} is that $\mcd(G)$ can be calculated by reading off information about the irreducible components of $\PS(G)$ from the Tutte polynomial of $G$, as we now explain.

The Tutte polynomial has the well-known \emph{corank-nullity formula}
$$T_G(x,y) = \sum_{A\subseteq E} (x-1)^{r(E)-r(A)} (y-1)^{\nul(A)}$$
where $r(A)$ denotes the rank of $A$ (that is, the size of a maximum acyclic subset of~$A$) and $\nul(A)=|A|-r(A)$.  By \cite[Thm.~1]{Picspace}, the homology of $\PSd(G)$ is free abelian and concentrated in even real dimension, and the Poincar\'e polynomial of $\PSd(G)$ --- the generating function for its topological Betti numbers --- is given by the formula
  \begin{equation}\label{poinc}
  \sum_{i\geq0} \ q^{2i} \dim H_i(\PSd(G),\Qq) \ = \ 
  (\qan{d}-1)^{|V|-1}\qan{d+1}T_G\left(\frac{\qan{2}\qan{d}}{\qan{d}-1},\qan{d}\right)
  \end{equation}
where $\qan{d}=1+q+q^2+\cdots+q^{d-1}=(1-q^d)/(1-q)$.  In particular, $\PSd(G)$ is irreducible if and only if the Poincar\'e polynomial is monic of degree $d|V|$.  Comparing the leading terms of the summands (for details, see \cite[Prop.~3.3]{DMR}) implies that $\PSd(G)$ is irreducible if and only if $d\cdot\nul(A)<|A|$ for all nonempty $A\subseteq E$.  Therefore,
\begin{align}
\label{irred-condition}
\mcd(G) &= \min\{d\in\Nn \st d\cdot\nul(A)\geq|A|\ \text{ for some nonempty } A\subseteq E\}\\
\label{mcd-over-A}
&= \min_{\0\neq A\subseteq E}\left\lceil\frac{|A|}{\nul(A)}\right\rceil.
\end{align}
In particular, the condition \eqref{irred-condition} becomes stricter as $d$ increases, so in fact $\PSd(G)$ is irreducible if and only if $d<\mcd(G)$.

A consequence of \eqref{irred-condition} is the inequality $\mcd(G)\leq\gir(G)$, where $\gir(G)$ denotes the \emph{girth} of $G$, i.e., the size of the smallest cycle in $G$.  Geometrically, the lines corresponding to the edges of an $n$-cycle must all lie in some affine space of dimension $<n$, giving a nontrivial constraint on their directions when $d\geq n$; combinatorially, if $A\subseteq E$ is the edge set of a cycle, then $r(A)=|A|-1$, and so \eqref{irred-condition} fails for $d\geq |A|$.  On the other hand, cycles with common edges can interact to produce a tighter constraint on $\mcd(G)$,
as in the case of ``onion graphs''; see Remark~\ref{girth-mcd} below.  A rough explanation is that contracting the edge set of a cycle can decrease the girth of a graph; this observation suggests interpreting $\mcd(G)$ in terms not only of cycles in~$G$, but also cycles of graphs produced from~$G$ by repeatedly contracting cycles.

We can restrict the edge sets that need to be considered in \eqref{mcd-over-A}.
In what follows, we assume familiarity with basic facts about graphic matroids (see, e.g., \cite{Oxley}, particularly \S4).

\begin{proposition}
\label{mcd-formula}
For every connected, non-acyclic graph $G$, we have
$$\mcd(G) = \min_A\left\lceil\frac{|A|}{\nul(A)}\right\rceil$$
where $A$ ranges over all nonempty flats of the graphic matroid $M=M(G)$ such that $M|_A$ is indecomposable as a direct sum.  (Graph-theoretically, $A$ ranges over all nonempty edge sets of 2-connected induced subgraphs of~$G$.)
\end{proposition}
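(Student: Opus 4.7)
The plan is to refine the formula~\eqref{mcd-over-A} by proving that the minimum of $f(A):=|A|/\nul(A)$ over nonempty $A\subseteq E$ is always attained on a flat whose matroid restriction $M|_A$ is indecomposable. The argument reduces to two monotonicity lemmas for $f$, together with a combinatorial check relating flats of $M(G)$ to blocks of subgraphs.

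First I would verify that $f$ is weakly decreasing under matroid closure: for $A\subseteq\bar A$, setting $k=|\bar A|-|A|$, we have $\nul(\bar A)=\nul(A)+k$ because $r(\bar A)=r(A)$, and since $|A|\geq\nul(A)$, the elementary identity $(|A|+k)/(\nul(A)+k)\leq|A|/\nul(A)$ gives $\lceil f(\bar A)\rceil\leq\lceil f(A)\rceil$. Thus the minimum in~\eqref{mcd-over-A} may be restricted to flats. Next, if $A$ is a flat admitting a nontrivial direct-sum decomposition $M|_A=M_1\oplus M_2$ on edge sets $A_1\sqcup A_2$, then $|A|$ and $\nul(A)$ both split additively (rank is additive over direct sums), and the mediant inequality yields $f(A)\geq\min(f(A_1),f(A_2))$. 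Any summand with $\nul(A_i)=0$ may simply be discarded, which only lowers the ratio.

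The main obstacle is to ensure that the retained summand is itself a flat of $M(G)$, so that the reduction iterates until $M|_A$ is indecomposable. The key combinatorial point is that the direct-sum decomposition of $M|_A$ coincides with the decomposition of the subgraph $(V(A),A)$ into blocks (maximal 2-connected subgraphs and bridges): any edge $e$ of $G$ with both endpoints in a single block's vertex set $V(B)$ must lie in $A$ because $A$ is a flat of $M(G)$, and hence in some block $B'$; but two distinct blocks share at most one vertex, which forces $B'=B$. Therefore $E(B)$ is a flat of $M(G)$, and the reduction proceeds.

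Iterating the two steps (each strictly decreases $|A|$, so termination is automatic) produces a flat $A$ with $M|_A$ indecomposable and $\lceil f(A)\rceil=\mcd(G)$. Finiteness of $\mcd(G)$ forces $\nul(A)>0$, so $(V(A),A)$ is a nontrivial 2-connected graph, and the flat condition gives $A=E(G|_{V(A)})$, yielding the graph-theoretic reformulation in the parenthetical remark.
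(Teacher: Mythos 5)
Your proof is correct and follows essentially the same route as the paper: the mediant inequality for direct-sum splittings together with monotonicity of $|A|/\nul(A)$ under matroid closure. You carry out the two reductions in the opposite order and therefore add the (correct) verification that the edge set of each block of a flat is again a flat, a compatibility point the paper's proof leaves implicit.
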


\begin{proof}
Let $A$ be a nonempty subset of $E$ minimizing $|A|/\nul(A)$.
If $M|_A=M|_{A_1}\oplus M|_{A_2}$, then $|A|=|A_1|+|A_2|$ and $\nul(A)=\nul(A_1)+\nul(A_2)$, so there is some $i$ for which $|A_i|/\nul(A_i)\leq|A|/\nul(A)$.  Therefore, we may assume that $M|_A$ is indecomposable.  Meanwhile, $|A|/\nul(A)=(\nul(A)+\rank(A))/\nul(A)=1+\rank(A)/\nul(A)$.  If $\bar A$ denotes the closure of $A$ in $M$, then $\rank(\bar A)=\rank(A)$ and $\nul(\bar A)\geq\nul(A)$; therefore, $|\bar A|/\nul(\bar A)\leq|A|/\nul(A)$.  Therefore, we may assume that $A=\bar A$, i.e., that $A$ is a flat of $M$.
\end{proof}

The formula can be restated in more explicitly graph-theoretic terms.  A \emph{partial ear decomposition} of $G$ is a sequence $(C_1,\dots,C_k)$ of pairwise disjoint edge sets such that $C_i$ is an induced cycle of $G/C_1/\cdots/C_{i-1}$ for each $i$.

\begin{proposition} \label{ear-formula}
For every connected, non-acyclic graph $G$, we have
\begin{equation}
\mcd(G) = \min_{(C_1,\dots,C_k)}\left\lceil\frac{\sum_{i=1}^k|C_i|}{k}\right\rceil
\end{equation}
the minimum over all partial ear decompositions of $G$.
\end{proposition}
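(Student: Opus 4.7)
The plan is to prove the formula by establishing both inequalities, using Proposition~\ref{mcd-formula} as the bridge.  The key observation is that partial ear decompositions $(C_1,\dots,C_k)$ correspond to edge sets $A\subseteq E$ satisfying $|A|/\nul(A)=\sum|C_i|/k$: setting $A=C_1\cup\cdots\cup C_k$ always gives $\nul(A)=k$, while any $A$ realizing the minimum in Proposition~\ref{mcd-formula} can conversely be expressed as a disjoint union of ears.

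For the $\le$ direction, given an arbitrary partial ear decomposition with $A=C_1\cup\cdots\cup C_k$, I would induct on $k$ to show $\rank_M(A)=|A|-k$.  The inductive step uses the standard rank-contraction identity: since $C_i$ is a circuit of $M/(C_1\cup\cdots\cup C_{i-1})$, its rank in that matroid is $|C_i|-1$, so $\rank_M(C_1\cup\cdots\cup C_i)=\rank_M(C_1\cup\cdots\cup C_{i-1})+|C_i|-1$.  Telescoping yields $\nul(A)=k$, so $|A|/\nul(A)=\sum|C_i|/k$; combining with~\eqref{mcd-over-A} gives $\mcd(G)\le\lceil\sum|C_i|/k\rceil$.

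For the $\ge$ direction, Proposition~\ref{mcd-formula} supplies a 2-connected induced subgraph $H\subseteq G$ with $\mcd(G)=\lceil|E(H)|/k\rceil$ where $k=\nul(E(H))$.  The goal is to build a partial ear decomposition of $G$ whose ears partition $E(H)$, so that $\sum|C_i|/k=|E(H)|/k$.  By Whitney's classical theorem, the 2-connected graph $H$ admits an ear decomposition $(C_1,P_2,\dots,P_k)$ starting with a cycle $C_1\subseteq E(H)$ and iteratively adding paths $P_j\subseteq E(H)$ whose endpoints---but not internal vertices---lie in $C_1\cup\cdots\cup P_{j-1}$.  Each $P_j$ becomes a cycle in the contracted graph, and because $H$ is an \emph{induced} subgraph of $G$, any chord of $P_j$ in $G/C_1/\cdots/P_{j-1}$ must come from an edge of $E(H)$ not yet used, i.e., from some later ear $P_{j'}$, since every edge of $G$ with both endpoints in $V(H)$ already lies in $E(H)$.

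The main obstacle is ruling out these internal chords within $H$.  I would handle this by choosing each $P_j$ at step $j$ to be an ear of \emph{minimum length}: a chord from a non-adjacent internal vertex to another internal vertex or to the contracted super-vertex would immediately provide a strictly shorter ear, contradicting minimality.  With this refinement of Whitney's theorem, each contracted ear is an induced cycle of $G/C_1/\cdots/P_{j-1}$, producing a valid partial ear decomposition of $G$ that realizes the ratio $|E(H)|/k$ and proves the reverse inequality.
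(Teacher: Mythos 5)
Your proposal is correct and follows essentially the same route as the paper: both directions are reduced to Proposition~\ref{mcd-formula} by observing that for any partial ear decomposition the number of ears equals the nullity of the union, and conversely by invoking the classical ear-decomposition theorem for 2-connected graphs to realize the optimal flat. The only difference is one of detail rather than substance: you spell out the telescoping rank computation and use shortest-ear choices to guarantee that each contracted ear is an induced cycle (noting that chords must come from $E(H)$ since the optimal flat induces a subgraph), points which the paper's one-line proof leaves implicit.
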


\begin{proof}
Every 2-connected edge set $A$ (in fact, every 2-edge-connected graph) has an ear decomposition~\cite[Prop~2.10]{Frank}, so the result follows from replacing the set $A$ in Proposition~\ref{mcd-formula} with the partial ear decomposition $(C_1,\dots,C_k)$ and observing that $k=\nul(A)$.
\end{proof}

\begin{example}
\label{onion}
Let $a_1\leq\cdots\leq a_k$ be positive integers, and let $O=O(a_1,\dots,a_k)$ 
be the ```onion'' graph formed by identifying $k$ disjoint paths $P_1,\dots,P_k$
of lengths $a_1,\dots,a_k$ at their endpoints, as shown.
\includefigure{3in}{1.5in}{onion}
The 2-connected edge sets of $O$ are precisely the unions of two or more of
the $P_i$, so
$$\mcd(O) = \left\lceil\min_{2\leq r\leq k}\left(\frac{a_{1}+\cdots+a_{r}}{r-1}\right)\right\rceil.$$
On the other hand, the girth of $O$ is $a_1+a_2$, which can be considerably larger;
for instance, if $k\gg 0$ and $a_1=\cdots=a_k=a$, then $\mcd(O)=a$ and $\gir(O)=2a$.
\end{example}

\begin{remark}
\label{girth-mcd}
The ratio $\gir(G)/\mcd(G)$ can be arbitrarily high, at least when $\mcd(G)=2$.  By a theorem of B.~Servatius \cite{Ser}, there exist 2-rigid graphs of arbitrarily high girth.  For any positive integer~$\ell$, let $G$ be a 2-rigid graph with girth $\geq 2\ell-2$, let $C$ be a minimum-length cycle in~$G$, and let~$x,y$ be vertices of $C$ at maximum distance, so that every path in~$G$ between~$x,y$ has length $\geq \ell-1$.  Then $G+xy$ contains a 2-rigidity circuit~$G'$, which has $\mcd(G')=2$; on the other hand, $\gir(G')\geq\gir(G)\geq\ell$.
\end{remark}

\begin{remark}
\label{edmonds}
The inequality in \eqref{irred-condition} can be rewritten as $|A|\geq(d/(d-1))r(A)$, which closely resembles Edmonds' condition~\cite{Edmonds} for decomposability of a matroid into independent sets; when $d=2$, it is precisely Laman's characterization~\cite{Laman} of 2-rigidity independence.  For more on this connection, see \cite{DMR}.
\end{remark}


\bibliographystyle{amsalpha}
\bibliography{biblio.bib}
\end{document}